\DeclareMathOperator{\obs}{obs}
\newcommand{\x}{\mathsf{x}}
\newcommand{\y}{\mathsf{y}}
\title{\MakeUppercase{On Obstacle Numbers}}
\author{Vida Dujmovi\'c and Pat Morin}
\begin{document}
\begin{titlepage}
\maketitle

\begin{abstract}
\setlength{\baselineskip}{16.8pt}
The obstacle number is a new graph parameter introduced by Alpert, Koch,
and Laison (2010).  Mukkamala \etal\ (2012) show that there exist graphs
with $n$ vertices having obstacle number in $\Omega(n/\log n)$. In this
note, we up this lower bound to $\Omega(n/(\log\log n)^2)$.  Our proof
makes use of an upper bound of Mukkamala \etal\ on the number of graphs
having obstacle number at most $h$ in such a way that any subsequent
improvements on their upper bound will improve our lower bound.
\end{abstract}
\end{titlepage}

\section{Introduction}

\setlength{\baselineskip}{16.8pt}
The obstacle number is a new graph parameter introduced by Alpert, Koch,
and Laison \cite{alpert.koch.ea:obstacle}.  Let $G=(V,E)$ be a graph,
let $\varphi:V\to \R^2$ be a one-to-one mapping of the vertices of
$G$ onto $\R^2$, and let $S$ be a set of connected subsets of $\R^2$.
The pair $(\varphi,S)$ is an \emph{obstacle representation} of $G$ when,
for every pair of vertices $u,w\in V$, the edge $uw$ is in $E$ if and only
if the open line segment with endpoints $\varphi(u)$ and $\varphi(w)$ does
not intersect any \emph{obstacle} in $S$.  An obstacle representation
$(\varphi,S)$ is an \emph{$h$-obstacle} representation if $|S|=h$.
The \emph{obstacle number} of a graph $G$, denoted by $\obs(G)$, is the
minimum value of $h$ such that $G$ has an $h$-obstacle representation.

\figref{fivebyfive} shows a surprising example of a 1-obstacle
representation of the $5\times 5$ grid graph, $G_{5\times 5}$, that
was given to us by Fabrizio Frati. In this figure, the single obstacle
is drawn as a shaded region. Since at least one obstacle is clearly
necessary to represent any graph other than a complete graph, this
proves that $\obs(G_{5\times 5}) = 1$.  (A similar drawing can be used
to show that the $a\times b$, grid graph has obstacle number 1, for any
integers $a,b>1$.)

\begin{figure}[hbpt]
  \begin{center}
    \includegraphics{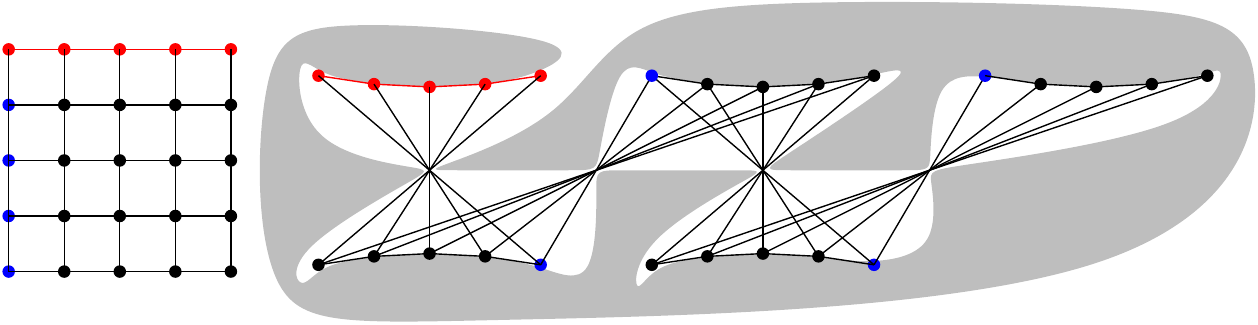}
  \end{center}  
  \caption{The $5\times 5$ grid graph has obstacle number 1.}
  \figlabel{fivebyfive}
\end{figure}

Since their introduction, obstacle numbers have generated significant
research interest 
\cite{%
   fulek.saeedi:convex,%
   johnson.sarioz:computing,%
   mukkamala.pach.ea:lower,%
   mukkamala.pach.ea:graphs,%
   pach.sarioz:small,%
   pach.sarioz:on,%
   sarioz:approximating%
}.
A fundamental---and far from answered---question about obstacle numbers
is that of determining the \emph{worst-case obstacle number},
\[
    w(n) = \max \{\obs(G) :\mbox{$G$ is a graph with $n$ vertices}\}
    \enspace ,
\] 
of a graph with $n$ vertices.

For a graph $G=(V,E)$, we call elements of $\binom{V}{2}\setminus E$
\emph{non-edges} of $G$.  The worst-case obstacle number $w(n)$ is
obviously upper-bounded by $\binom{n}{2}\in O(n^2)$ since, by mapping
the vertices of $G$ onto a point set in sufficiently general position,
one can place a small obstacle---even a single point---on the mid-point
of each non-edge of $G$.  No upper-bound on $w(n)$ that is asymptotically
better than $O(n^2)$ is known.

More is known about lower-bounds on $w(n)$.  Alpert \etal\
initially show that the worst-case obstacle number is
$\Omega(\sqrt{\log n/\log\log n})$ and posed as an open problem the question
of determining if $w(n)\in\Omega(n)$.
%\footnote{Alpert \etal's lower-bound uses the Erd\"os-Szekeres convex
%$n$-gon theorem and shows the existence of an $n$-vertex graph $G$
%with $\obs(G)\ge \sqrt{\log n/\log\log n}$.}
Mukkamala \etal\ \cite{mukkamala.pach.ea:graphs} showed that $w(n)\in
\Omega(n/\log^2 n)$ and Mukkamala \etal\ \cite{mukkamala.pach.ea:lower}
later increased this to $w(n)\in\Omega(n/\log n)$.  In the current paper,
we up the lower-bound again by proving the following theorem:
\begin{thm}\thmlabel{main}
  For every integer $n>0$, $w(n)\in\Omega(n/(\log\log n)^2)$, i.e., there
  exist graphs, $G$, with $n$ vertices and $\obs(G)\in\Omega(n/(\log\log
  n)^2)$.
\end{thm}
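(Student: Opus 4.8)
The plan is to feed the counting bound of Mukkamala \etal\ into a \emph{decoupling} construction that packs many small hard graphs into one graph. Write $2^{\gamma(n,h)}$ for their bound on the number of labelled $n$-vertex graphs with obstacle number at most $h$, so that $\gamma(n,h)=O(h\,n\,(\log n)^2)$. Comparing directly against the $2^{\binom n2}$ graphs on $n$ vertices already gives $w(n)=\Omega(n/\log^2 n)$; the idea is to instead apply the bound at a much smaller scale $m$ and amplify. Fix $m$, to be chosen of order $\log n$ times a slowly growing factor, and put $k=\lfloor n/m\rfloor$. Form a family $\mathcal F$ of $n$-vertex graphs by fixing a ``frame'' on a vertex set partitioned into $k$ \emph{slots} $S_1,\dots,S_k$ of size $m$, and letting each graph of $\mathcal F$ be obtained by filling each slot with an arbitrary graph $G[S_i]$ while keeping the frame (all edges with the two endpoints not in a common slot) fixed. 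Then $|\mathcal F|=2^{k\binom m2}=2^{\Theta(nm)}$. The frame is chosen---say as a suitable blow-up that forces the $k$ slot-clusters into separated regions of the plane---so that it behaves rigidly in every obstacle representation.

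The heart of the argument is a \emph{decoupling lemma}: if $G\in\mathcal F$ has an $h$-obstacle representation, then one can apportion numbers $h_1,\dots,h_k$ with $\sum_i h_i=O(h+k)$ so that, for each $i$, the induced drawing of $G[S_i]$ together with the obstacles assigned to $S_i$ is an obstacle representation of $G[S_i]$; in particular $\obs(G[S_i])\le h_i$. The mechanism is that, since the slot-clusters lie in separated regions and each obstacle is connected, an obstacle can be charged to the slots whose regions it actually meets, with total charge $O(h+k)$. Granting this, the number of $G\in\mathcal F$ with $\obs(G)\le h$ is at most the number of ways to apportion the budget among slots (at most $(h+1)^k\le 2^{O(n\log n)}$) times $\prod_{i=1}^k$ (number of $m$-vertex graphs with obstacle number at most $h_i$), hence at most
\[
  2^{O(n\log n)}\cdot\prod_{i=1}^{k} 2^{\gamma(m,h_i)}
  \;\le\; 2^{\,O(n\log n)\;+\;O\!\left((h+k)\,m\,(\log m)^2\right)} .
\]
If this is smaller than $|\mathcal F|=2^{\Theta(nm)}$, some graph of $\mathcal F$ has obstacle number exceeding $h$. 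Taking $m=\Theta(\log n\log\log n)$ makes the $2^{\Theta(nm)}$ term dominate both $2^{O(n\log n)}$ and $2^{O(km(\log m)^2)}=2^{O(n(\log m)^2)}$, and the remaining constraint holds as soon as $h=O\!\left(n/(\log m)^2\right)=O\!\left(n/(\log\log n)^2\right)$, since $\log m=\Theta(\log\log n)$. This yields an $n$-vertex graph with obstacle number $\Omega(n/(\log\log n)^2)$, proving \thmref{main}.

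Since the only feature of $\gamma$ used is that $\gamma(n,h)=O(h\,n\,\psi(n))$ for a (slowly growing) function $\psi$---currently $\psi(n)=(\log n)^2$---the same computation turns any improved bound into $w(n)=\Omega\!\left(n/\psi(\Theta(\log n))\right)$; e.g.\ $\psi(n)=\log n$ would give $\Omega(n/\log\log n)$. So the Mukkamala \etal\ bound is genuinely used as a black box at scale $m$, and improvements propagate as advertised.

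I expect the decoupling lemma to be the real obstacle. One must choose a frame whose obstacle representations are essentially forced to place the $k$ slots into $k$ separated cells---natural candidates are a complete $k$-partite frame, or a frame built from a rigid grid or convex-polygon gadget---and then prove the charging estimate: that the connected obstacles in any $h$-obstacle representation meet $O(h+k)$ slot-cells in total, so that restricting the representation to a single cell leaves that slot's induced graph correctly represented by the obstacles charged to it. Controlling obstacles that thread through many cells, and choosing a frame whose rigidity does not degrade when arbitrary graphs are dropped into the slots, is where the work lies; the rest is bookkeeping of the parameters $m,k,h$ around the counting bound.
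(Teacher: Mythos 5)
The engine of your argument---apply the Mukkamala \etal\ counting bound at a polylogarithmic scale $m$ rather than at scale $n$, and amplify over $\Theta(n/m)$ geometrically separated cells via the charging $\sum_i(\obs(G[S_i])-1)\le h$ (obstacles wholly contained in disjoint convex cells are distinct, and all the ``escaping'' obstacles of one cell can be merged with the cell's exterior into a single extra obstacle)---is exactly the engine of the paper, and your parameter bookkeeping around $m=\Theta(\log n\log\log n)$ is essentially sound. The genuine gap is the one you flag yourself: the decoupling lemma, and specifically its geometric half, the existence of a \emph{frame} whose obstacle representations are forced to place the $k$ slot-clusters into separated regions of the plane. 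No such rigidity gadget is known, and this is precisely what makes obstacle-number lower bounds hard: in an obstacle representation the map $\varphi$ is an \emph{arbitrary} injection into $\R^2$, and the combinatorics of $G$ impose almost no control over where the vertices land (\figref{fivebyfive} is a cautionary example---the grid's vertices are nowhere near a grid). A complete multipartite frame, a grid, or a convex-polygon gadget does not force clustering; an adversarial embedding can interleave the slots arbitrarily, and then your $O(h+k)$ total charge fails (one connected obstacle can be needed, as a disconnected clipped set, by many slots) and the restriction of the representation to a slot need not represent $G[S_i]$ at all. Proving such a forcing lemma looks at least as hard as the theorem itself.

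The paper circumvents this by never trying to control the embedding. It takes $G=G_{n,\frac12}$ random and, for each \emph{fixed} embedding, slices the point set into $\lfloor n/k\rfloor$ vertical slabs of $k$ consecutive points \emph{of that embedding}: the slabs are disjoint convex cells by fiat, so the separation you want comes for free, and \thmref{pach-gang} at scale $k=\Theta(\sqrt{c}\log n)$ plus a Chernoff bound over the slabs shows that this one embedding requires $\Omega(n/(\log\log n)^2)$ obstacles except with probability $e^{-\Omega(cn\log n)}$ (\lemref{fixed}). The price is a union bound over all combinatorially distinct embeddings; since order types provably do not determine obstacle counts (\figref{order-type-problem}), the paper introduces \emph{super-order types}, shows they do determine them (\lemref{order-type}, \lemref{simple}), and shows there are only $e^{O(n\log n)}$ of them (\lemref{order-type-count}), which is beaten by $e^{-\Omega(cn\log n)}$ for large $c$. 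If you tried to rescue your structured family $\mathcal{F}$ without the forcing lemma, you would still need this union bound over embeddings, and the embedding-defined slabs would mix your slots arbitrarily---at which point you are better off with the random graph and have essentially rederived the paper's proof. The observation in your last paragraph, that improvements to the counting bound propagate, is correct and matches the paper's concluding remarks.
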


The proof of \thmref{main} makes use of an upper bound of Mukkamala \etal\
\cite[Theorem~1]{mukkamala.pach.ea:lower} on the number of graphs having
obstacle number at most $h$ in such a way that any subsequent improvements
on their upper bound will result in an improved lower bound on $w(n)$.

\section{The Proof}

Our proof strategy is an application of the probabilistic method
\cite{alon.spencer:probabilistic}.  
We will show that, for a random graph,
$G$, with a fixed embedding, the probability, $p$, that this embedding
allows for an obstacle representation with few obstacles is extremely
small.  We will then show that the number, $N$, of combinatorially
distinct embeddings is not too big.  Small and big in this case are
defined so that $pN < 1$.  Therefore, by the union bound, there exists at
least one graph, $G'$, that has no embedding that allows for an obstacle
representation with few obstacles.  In other words, $\obs(G')$ is large.

\subsection{A Random Graph with a Fixed Embedding}

We make use of the following theorem, due to Mukkamala, Pach, and
P\'alv\"olgyi \cite[Theorem~1]{mukkamala.pach.ea:lower} about the number
of $n$ vertex graphs with obstacle number at most $h$:
\begin{thm}[Mukkamala, Pach, and P\'alv\"olgyi 2012]\thmlabel{pach-gang}
  For any $h\ge 1$, the number of graphs having $n$ vertices and
  obstacle number at most $h$ is at most $2^{O(hn\log^2 n)}$.
\end{thm}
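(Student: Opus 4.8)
The plan is to strip the continuous freedom out of an obstacle representation in three moves—fix the order type of the point set, reduce $h$ obstacles to a single obstacle, and bound the one‑obstacle count—and then reassemble. For the first move, note that the represented graph depends on the embedding $\varphi$ only through which of the $\binom n2$ open segments $\varphi(u)\varphi(w)$ are met by an obstacle; since the number of realizable order types of $n$ points in the plane is $2^{O(n\log n)}$ (a classical consequence of the Milnor--Thom bound applied to the $\binom n3$ orientation polynomials), it suffices to fix a point set $P$ with $|P|=n$ in general position and bound the number $M(P,h)$ of graphs admitting an $h$‑obstacle representation with point set $P$; the final count then picks up only a harmless factor $2^{O(n\log n)}$.

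For the second move, suppose $S=\{O_1,\dots,O_h\}$ realizes $G$ on $P$, and let $G_j$ be the graph on $P$ whose edges are exactly the segments avoiding $O_j$. Then each $G_j$ is realizable on $P$ with a \emph{single} obstacle, and $G=\bigcap_j G_j$. Fixing one representation per graph, the map $G\mapsto(G_1,\dots,G_h)$ is injective (one recovers $G$ as the intersection), so $M(P,h)\le M(P,1)^h$, and it remains to prove $M(P,1)\le 2^{O(n\log^2 n)}$.

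The third move is the technical heart. One first normalizes the single obstacle: since only the set of segments it blocks matters, a connected obstacle $O$ may be replaced by a connected union of faces of the arrangement $\mathcal A$ of the $\binom n2$ lines through pairs of points of $P$, and hence by a polygonal region whose boundary edges lie along lines of $\mathcal A$. The key claim is that such a region can be taken to have only $O(n\log n)$ vertices—morally, its boundary is a closed curve that merely has to separate, around each of the $n$ points, the blocked angular sectors from the unblocked ones, and a canonical routing handles each point with $O(\log n)$ boundary edges. Granting this, the obstacle is described by $d=O(n\log n)$ real coordinates, and whether a given pair is an edge of the realized graph is decided by the signs of a fixed family of $m=\mathrm{poly}(n)$ bounded‑degree polynomials in these $d$ variables (for each segment and each obstacle edge, the $O(1)$ conditions testing whether they cross). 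By the Milnor--Thom/Warren bound the number of attainable sign vectors—hence the number of realized graphs—is $2^{O(d\log m)}=2^{O(n\log n\cdot\log n)}=2^{O(n\log^2 n)}$, which is what we need.

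The step I expect to be the real obstacle is precisely this complexity normalization: a connected obstacle can a priori be arbitrarily intricate (a fine ``comb'' casting many disjoint shadows onto the same pencil of segments), and one must argue that such intricacy never enlarges the family of realizable graphs, i.e.\ that $O(n\log n)$‑vertex polygonal obstacles already realize all of them. Everything else—the order‑type count, the intersection trick, and the sign‑pattern estimate—is routine. Since the exponent is driven entirely by the per‑obstacle complexity bound, any improvement there (say to $O(n)$ vertices per obstacle) would immediately improve the theorem to $2^{O(hn\log n)}$, which is exactly the sense in which better bounds on the right‑hand side of \thmref{pach-gang} feed into \thmref{main}.
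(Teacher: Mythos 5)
First, a point of orientation: the paper does not prove \thmref{pach-gang}---it is imported verbatim from Mukkamala, Pach, and P\'alv\"olgyi and used as a black box, and the only description of their argument here is the sentence in the Remarks explaining that they encode any such graph by the order type of a set of $O(hn\log n)$ points consisting of the $n$ vertices \emph{together with} the corners of polygonal obstacles. Your proposal reconstructs that outline in spirit, and your intersection trick $M(P,h)\le M(P,1)^h$ is sound (it is the same observation the paper itself makes in support of \conjref{h}). But your opening move contains an actual error, not merely a gap: you claim that the family of graphs realizable with $h$ obstacles on a point set $P$ is an invariant of the order type of $P$, so that one representative point set per order type suffices. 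The paper exhibits a counterexample in \figref{order-type-problem}: two embeddings of the same graph with the same order type (indeed the same combinatorial type), one admitting a $1$-obstacle representation and the other requiring two. So summing $M(P_\tau,h)$ over one representative per order type does not cover all graphs with obstacle number at most $h$; this is precisely the failure that forces the paper to invent super-order types for its own counting. The repair changes your architecture: do not fix $P$, but include its $2n$ coordinates among the Milnor--Thom variables alongside the obstacle-corner coordinates (equivalently, count order types of the combined $O(hn\log n)$-point set, as Mukkamala et al.\ do); the variable count remains $O(hn\log n)$ and the bound $2^{O(hn\log^2 n)}$ survives.

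The second problem is the one you flagged yourself: the assertion that a connected obstacle can be replaced by a polygon with $O(n\log n)$ vertices blocking exactly the same segments is the entire content of the theorem, and ``a canonical routing handles each point with $O(\log n)$ boundary edges'' is a hope, not an argument---nothing in your write-up rules out the fine ``comb'' you worry about except the claim itself. Since the order-type count, the intersection trick, and the sign-pattern estimate are all routine, what you have is a (repairable, modulo the order-type slip) reduction of \thmref{pach-gang} to its one hard ingredient, not a proof of it.
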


Recall that an Erd\"os-Renyi random graph $G_{n,\frac{1}{2}}$ is a
graph with $n$ vertices and each pair of vertices is chosen as an edge
or non-edge with equal probability and independently of every other pair
of vertices \cite{erdos.renyi:random}.  The following lemma shows that,
for random graphs, a fixed embedding is \emph{very} unlikely to yield
an obstacle representation with few obstacles.

\begin{lem}\lemlabel{fixed}
  Let $G=(V,E)$ be an Erd\"os-R\'enyi random graph $G_{n,\frac{1}{2}}$,
  let $\varphi\from V\to \R^2$ be a one-to-one mapping that is
  independent of the choices of edges in $G$, and let $(\varphi, S)$ be
  an obstacle representation of $G$ using the minimum number of obstacles
  (subject to $\varphi$).  Then, for any constant $c>0$,
  \[
     \Pr\{|S| \in \Omega(n/(\log\log n)^2) \ge 1-e^{-\Omega(cn\log n)}  \enspace .
  \] 
\end{lem}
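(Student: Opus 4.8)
The plan is to chop the fixed point set $\varphi(V)$ into many small, pairwise well‑separated clusters, to invoke \thmref{pach-gang} inside each cluster to force many obstacles locally, and then to add the local lower bounds into a global one. Fix the embedding $\varphi$ and the constant $c>0$. First I would partition $V$ into $k=n/m$ parts $V_1,\dots,V_k$, each of size $m=\Theta(c\log n)$, chosen from the geometry of $\varphi$ so that the point sets $\varphi(V_1),\dots,\varphi(V_k)$ lie in pairwise disjoint regions $R_1,\dots,R_k$ of the plane that are well separated in a sense made precise in the last step. For each $i$ write $d_i:=\obs_{\varphi|_{V_i}}(G[V_i])$ for the least number of obstacles that represents $G[V_i]$ under the restricted embedding. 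Since $V_1,\dots,V_k$ are pairwise disjoint, $G[V_1],\dots,G[V_k]$ are independent copies of $G_{m,\frac{1}{2}}$, so $d_1,\dots,d_k$ are independent random variables.

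Next I would show that each $d_i$ is large except with super‑exponentially small probability. By \thmref{pach-gang} (applied with $m$ in place of $n$), the number of graphs on the $m$ labelled vertices of $V_i$ with obstacle number at most $t$ is at most $2^{\alpha\,t\,m\log^2 m}$ for some absolute constant $\alpha$. Choosing $t:=m/(8\alpha\log^2 m)$ makes this at most $2^{\binom{m}{2}/2}$, so, using $\obs(G[V_i])\le d_i$,
\[
   \Pr\{d_i\le t\}\le\Pr\{\obs(G[V_i])\le t\}\le\frac{2^{\binom{m}{2}/2}}{2^{\binom{m}{2}}}=2^{-\binom{m}{2}/2}=2^{-\Omega(m^2)}\enspace .
\]

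Then I would assemble the global bound. Call a cluster \emph{good} if $d_i>t$. By the previous display and independence, the probability that fewer than $k/2$ clusters are good is at most $\binom{k}{k/2}\bigl(2^{-\Omega(m^2)}\bigr)^{k/2}\le 2^{k-\Omega(km^2)}=2^{-\Omega(nm)}$, which is $e^{-\Omega(cn\log n)}$ once the constant hidden in $m=\Theta(c\log n)$ is chosen large enough. Conditioning on the complementary event and using $k/2=\Theta(n/(c\log n))$ together with $t=\Theta(c\log n/(\log\log n)^2)$ (the two factors of $c$ cancel), at least $k/2$ good clusters give
\[
   \sum_{i=1}^{k}d_i\ge\frac{k}{2}\cdot t=\Omega\!\left(\frac{n}{(\log\log n)^2}\right)\enspace .
\]
Hence it remains only to prove that $|S|\ge\frac{1}{C}\sum_{i=1}^{k}d_i$ for an absolute constant $C$.

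This last inequality is the main obstacle, and it is the only step that uses the fine geometry of $\varphi$ rather than \thmref{pach-gang} as a black box. The natural approach is a charging argument: for each $i$, the portions of the obstacles of $S$ that lie inside $R_i$ already form an obstacle representation of $G[V_i]$ under $\varphi|_{V_i}$, so at least $d_i$ connected \emph{obstacle pieces} must appear inside $R_i$; if the regions $R_1,\dots,R_k$ are chosen separated strongly enough that any single connected obstacle of $S$ can be charged to only $O(1)$ of them, then $|S|\ge\frac{1}{C}\sum_i d_i$ by double counting. Making this rigorous has two delicate ingredients — exhibiting, for an \emph{arbitrary} embedding, a clustering whose regions enjoy such a strong separation property (itself a nontrivial geometric statement), and controlling obstacles that weave between many clusters — and I expect it to be the hardest part of the proof. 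Finally, since \thmref{pach-gang} is used only in the second step, any strengthening of its exponent from $\alpha hn\log^2 n$ to $\alpha hnf(n)$ would immediately improve the conclusion to $\Omega(n/f(\log n))$, and hence improve \thmref{main}.
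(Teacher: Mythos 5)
Your overall architecture is the same as the paper's: split the point set into $\Theta(n/\log n)$ clusters of size $\Theta(\log n)$, apply \thmref{pach-gang} inside each cluster to show the induced subgraph needs $\Omega(\log n/(\log\log n)^2)$ obstacles except with probability $2^{-\Omega(\log^2 n)}$, and use independence plus a concentration bound to get at least half the clusters to succeed simultaneously. Those steps are correct as you state them.

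The genuine gap is exactly the step you flag as the hardest, and the route you sketch for it would not work. No separation of the regions $R_1,\dots,R_k$, however strong, can guarantee that a single connected obstacle meets only $O(1)$ of them: one long, thin obstacle can weave through every cluster, so its ``pieces'' inside the $R_i$ cannot be charged back to it a bounded number of times, and the inequality $|S|\ge\frac{1}{C}\sum_i d_i$ does not follow from a piece-counting argument. The paper closes this differently and more simply. It takes the clusters to be consecutive groups of $k$ points in $x$-order, so that the convex hulls of the clusters lie in pairwise disjoint vertical slabs, and then observes that if the induced subgraph on cluster $i$ has obstacle number $h_i$ (under the restricted embedding), then at least $h_i-1$ obstacles of $S$ must be \emph{entirely contained} in the convex hull $H_i$ of that cluster's points. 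The reason is that every obstacle not contained in $H_i$ has a point in the connected set $\R^2\setminus H_i$, so all such obstacles can be merged into a single connected obstacle by joining them outside $H_i$; this does not change which segments among the cluster's points are blocked, and it yields a representation of the induced subgraph with (number of obstacles inside $H_i$) $+\,1$ obstacles, whence that number is at least $h_i-1$. Since the hulls are disjoint, these wholly contained obstacles are distinct across clusters and $|S|\ge\sum_i(h_i-1)$, which is all that is needed. So your proposal is recoverable, but the missing ingredient is this ``all but one local obstacle must lie inside the hull'' observation, not a separation/charging lemma.
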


\begin{proof}
Let $P\subset\R^2$ denote the image of $\varphi$.  Fix some integer $k$ to be
specified later and first consider some arbitrary subset $P'\subset P$
of $k$ points and let $G'=(V',E')$ be the subgraph of $G$ induced by
the set $V'=\{\varphi^{-1}(x):x\in P'\}$ of vertices that are mapped by
$\varphi$ to $P'$.  Applying \thmref{pach-gang} with $n=k$ and $h=\alpha
k/\log^2 k$, we obtain
\begin{equation}
     \Pr\{\obs(G') \le \alpha k/\log^2 k\} 
       \le \frac{2^{O(\alpha k^2)}}{2^{\binom{k}{2}}}
       = e^{-\Omega(k^2)} \enspace , \eqlabel{g1}
\end{equation}
for a sufficiently small constant $\alpha > 0$.  Note that, if
$\obs(G')\ge h$, then, in the obstacle representation $(\varphi,S)$,
there must be at least $h-1$ obstacles of $S$ that are contained in the
convex hull of $P'$.

Without loss of generality assume that no two points in $P$ have the
same x-coordinate and denote the points in $P$ by $x_0,\ldots,x_{n-1}$
by increasing order of x-coordinate.  Let $m=\lfloor n/k\rfloor$ and
consider the point sets $P'_0,\ldots,P'_{m-1}$, where
\[ 
  P_i'=\{x_{ik},x_{ik+1},\ldots,x_{ik+k-1}\} \enspace .
\]  
That is, $P_0',\ldots,P_{m-1}'$ are determined by vertical slabs,
$s_0,\ldots,s_{m-1}$ that each contain $k$ points.  \Eqref{g1} shows
that, with probability at least $1-2^{-\Omega(k^2)}$, the obstacle number
of the subgraph that maps to $P'_i$ is $\Omega(k/\log^2 k)$.  If this
occurs, then $S$ has $\Omega(k/\log^2 k)$ obstacles that are completely
contained in the slab $s_i$.  These obstacles are therefore disjoint
from any other obstacles contained in any other slab $s_j$, $j\neq i$.

We are proving a lower bound on the number of obstacles, so we are
worried about the case where the number of slabs that do \emph{not}
completely contain at least $\alpha k/\log^2 k$ obstacles exceeds $m/2$.
The number of slabs, $M$, not containing at least $\alpha k/\log^2 k$
obstacles is dominated by a binomial$(m,2^{-\Omega(k^2)})$ random
variable.  Using Chernoff's bound on the tail of a binomial random
variable,\footnote{%
  Chernoff's Bound: For any binomial$(m,p)$ random variable, $B$,
  any $\delta>0$ and $\mu=mp$, 
  \[ \Pr\{B\ge (1+\delta)\mu\}
     \le \left(\frac{e^{\delta}}{(1+\delta)^{1+\delta}}\right)^{\mu} 
       \enspace . 
  \]}
we have that
\begin{align*}
  \Pr\{M \ge m/2\} & = \Pr\{M\ge (1+\delta)\mu\}
    & \text{(where $\mu=me^{-ck^2}$ and $\delta=e^{ck^2-1}-1$)} \\
    & \le \left(\frac{e^{\delta}}{(1+\delta)^{1+\delta}}\right)^{\mu} \\
    & = \left(\frac{e^{e^{ck^2}}}{(e^{ck^2-1})^{e^{ck^2-1}}}\right)^{me^{-ck^2}}\\
    & = \left(\frac{e^{e^{ck^2}}}{e^{(ck^2-1)e^{ck^2-1}}}\right)^{me^{-ck^2}}\\
    & = \frac{e^{m}}{e^{m(ck^2-1)e^{ck^2-1}e^{-ck^2}}} \\
    & = \frac{e^{m}}{e^{m(ck^2-1)/e}} \\
    & = e^{-\Omega(mk^2)} \enspace .
\end{align*}
Taking $k=\sqrt{c}\log n$ and recalling that $m=\lfloor n/k\rfloor$, we obtain
the desired result.  In particular,
\[
    |S| \ge \Omega\left(\left(k/\log^2 k\right)\times m \right)
      = \Omega\left(n/(\log\log n)^2\right)
\]
with probability at least
\[
    1-e^{-\Omega(mk^2)} = 1-e^{-\Omega(cn\log n)} \enspace . \qedhere
\]
\end{proof}

We have completed the first step in our application of the probabilistic
method.  \lemref{fixed} shows that the probability, $p$, that a
particular embedding of the random graph $G$ is able to yield an obstacle
representation with $o(n/(\log\log n)^2)$ obstacles is extremely small.
The remaining difficulty is establishing a sufficiently strong upper-bound
on $N$, the number of embeddings of $G$. In actuality, the number of
embeddings is uncountable.  However, we are interested in the number of
``combinatorially distinct'' embeddings.  In particular, we would like
to partition the set of embeddings into equivalence classes such that,
within each equivalence class, the minimum number of obstacles in an
obstacle representation remains the same.

Classifying embeddings (i.e., labelled sets of $n$ points) into
combinatorially distinct equivalence classes has been considered
previously. Several definitions of equivalence exist, including oriented
matroid (a.k.a., chirotope) equivalence
\cite{bland.vergnas:orientability,folkman.lawrence:oriented},
semispace equivalence \cite{goodman.pollack:semispaces}, order equivalence
\cite{goodman.pollack:multidimensional}, and combinatorial equivalence
\cite{goodman:on,goodman.pollack:semispaces}.  For the latter two
definitions of equivalence, the number of distinct (equivalence classes
of) point sets is $e^{O(n\log n)}$ \cite{goodman.pollack:upper}.

%Definitions include equivalence classes of point
%sets with the same \emph{order type} \cite{goodman.pollack:upper}, in which the orientation
%of every triple of points is preserved and equivalence classes of point
%sets with the same \emph{combinatorial-type}, in which a labelled point
%set is identified with the periodic sequence of permutations obtained
%by orthogonally projecting the points onto a line that rotates around
%a fixed point \cite{goodman.pollack:}.  Under both definitions, the number of distinct
%(equivalence classes of) point sets is $e^{O(n\log n)}$

Unfortunately, neither order types nor combinatorial-types are
sufficient for answering questions about obstacle representations.
To see this, consider the two embeddings of the same graph shown in
\figref{order-type-problem}.  These two embeddings have the same order
type and the same combinatorial type. However, the embedding on the right
admits an obstacle representation with one obstacle, while the one on
the left requires two obstacles. To see why this is so, observe that each
embedding needs an obstacle on the outer face (shown). For the embedding
on the right, this single obstacle is sufficient, but the embedding
on the left needs an additional obstacle inside one of the inner faces.

\begin{figure}[htbp]
  \begin{center}
    \includegraphics{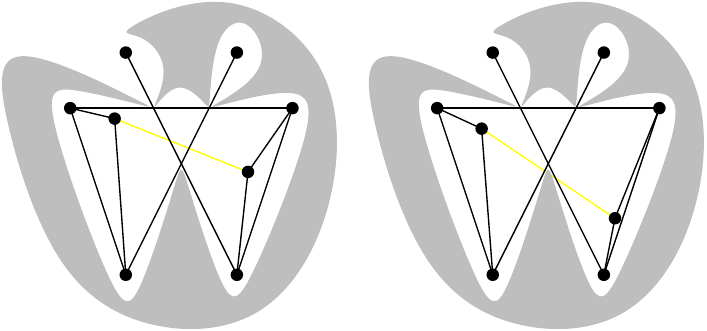}
  \end{center}
  \caption{Order type and combinatorial type are insufficient to determine 
      the number of obstacles needed in an obstacle representation. The yellow
      segment represents a non-edge.}
  \figlabel{order-type-problem}
\end{figure}

\subsection{Super-Order Types}

We now define an equivalence relation on point sets that is
strong enough for our purposes.  Consider a sextuple $T=\langle
a_1,a_2,b_1,b_2,c_1,c_2\rangle$ of points such that
\begin{enumerate}
  \item  $a_1\neq a_2$, $b_1\neq b_2$, $c_1\neq c_2$, 
  \item  $\{a_1,a_2\}\neq \{b_1,b_2\}$, 
$\{b_1,b_2\}\neq \{c_1,c_2\}$, $\{c_1,c_2\}\neq \{a_1,a_2\}$, and 
  \item $\{a_1,a_2\}\cap\{b_1,b_2\}\cap\{c_1,c_2\}=\emptyset$.
\end{enumerate}
We call a sextuple $T$ with this property an \emph{admissible} sextuple.
Let $A$ denote the directed line through $a_1$ and $a_2$ that is directed
from $a_1$ towards $a_2$. Define $B$ and $C$ similarly, but with respect
to $b_1,b_2$ and $c_1,c_2$, respectively.  We say that the sextuple,
$T$, is \emph{degenerate} if
\begin{enumerate}
  \item any of $A$, $B$, or $C$ is vertical;
  \item $A$ is parallel to $B$ or to $C$; or
  \item $A$, $B$, and $C$ contain a common point.
\end{enumerate}
We define the \emph{type}, $\sigma(T)$, of $T$ as
\[
    \sigma(T) = \left\{\begin{array}{rl}
      -1 & \text{if $A\cap B$ comes before $A\cap C$ on $A$.} \\
      0 & \text{if $T$ is degenerate} \\
      +1 & \text{otherwise ($A\cap B$ comes after $A\cap C$ on $A$).} 
    \end{array}\right.
\]
(See \figref{super-order}.)  Let $\langle\langle
i_{1,\ell},i_{2,\ell},j_{1,\ell},j_{2,\ell},k_{1,\ell},k_{2,\ell}\rangle:
\ell \in \{1,\ldots,r\}\rangle$ be any sequence that lists the
admissible sextuples of the index set $\{1,\ldots,n\}$.  Note that $r<
\binom{n}{2}^3$.  The \emph{super-order type} of a sequence $P=\langle
x_1,\ldots,x_n\rangle$ of $n$ distinct points is the sequence
\[
   \sigma(P) = \left\langle \sigma\left(x_{i_{1,\ell}},x_{i_{2,\ell}},
       x_{j_{1,\ell}},x_{j_{2,\ell}},
       x_{k_{1,\ell}},x_{k_{2,\ell}}\right) : \ell\in\left\{1,\ldots,r\right\} \right\rangle \enspace .
\]
Finally, we say that super-order type is \emph{simple} if it contains
no zeros and a sequence of points is \emph{simple} if its super-order
type is simple.  The following lemma shows that super-order types are
sufficient for answering questions about obstacle representations.

\begin{figure}[htbp]
  \begin{center}
    \begin{tabular}{cc}
       \includegraphics{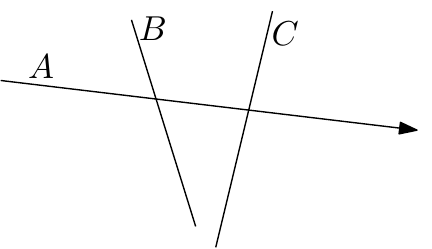} &
       \includegraphics{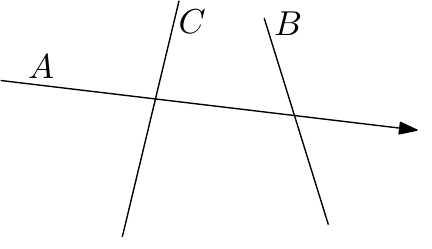} \\ 
       -1 & 1 \\
    \end{tabular}\\[4ex]
    \begin{tabular}{ccc}
       \includegraphics{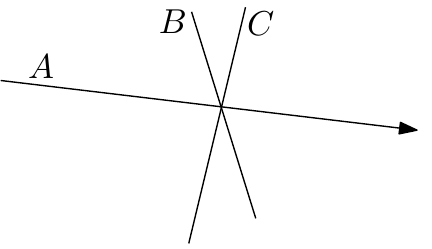} &
       \includegraphics{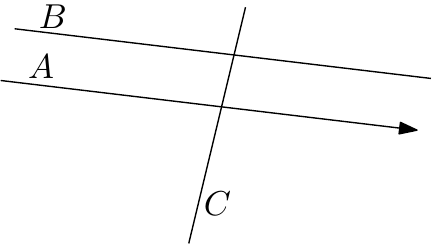} &
       \includegraphics{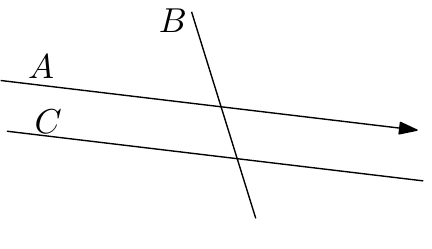} \\
       0 & 0 & 0
    \end{tabular}
  \end{center}
  \caption{The three cases of three lines defined by 6 points that can
      occur in the super-order type.}
  \figlabel{super-order}
\end{figure}

\begin{lem}\lemlabel{order-type}
  Let $G$ be a graph with vertex set $V=\{1,\ldots,n\}$ and let
  $\varphi_1\from V\to\R^2$ and $\varphi_2\from V\to\R^2$ be two embeddings
  of $G$ such that $\langle\varphi_1(1),\ldots,\varphi_1(n)\rangle$
  and $\langle\varphi_2(1),\ldots,\varphi_2(n)\rangle$ have the same
  super-order type.  Then, if $G$ has an $h$-obstacle representation
  $(\varphi_1,S_1)$ then it also has $h$-obstacle representation
  $(\varphi_2,S_2)$.
\end{lem}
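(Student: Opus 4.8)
The plan is to reduce the lemma to a purely geometric statement about the two point configurations and then transport the obstacles by a single homeomorphism. The geometric statement I will establish is: if $\langle\varphi_1(1),\dots,\varphi_1(n)\rangle$ and $\langle\varphi_2(1),\dots,\varphi_2(n)\rangle$ have the same super-order type, then there is a homeomorphism $g\from\R^2\to\R^2$ (possibly orientation-reversing) with $g(\varphi_1(i))=\varphi_2(i)$ for every $i\in V$ which, for every pair $i,j\in V$, maps the segment from $\varphi_1(i)$ to $\varphi_1(j)$ onto the segment from $\varphi_2(i)$ to $\varphi_2(j)$. Granting this, suppose $(\varphi_1,S_1)$ is an $h$-obstacle representation of $G$ and set $S_2=\{g(s):s\in S_1\}$. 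Since $g$ is a homeomorphism, each $g(s)$ is connected and $|S_2|=|S_1|=h$; moreover $g$ maps endpoints to endpoints, so it maps the \emph{open} segment $\varphi_1(i)\varphi_1(j)$ onto the open segment $\varphi_2(i)\varphi_2(j)$, and hence an obstacle $s$ meets $\varphi_1(i)\varphi_1(j)$ if and only if $g(s)$ meets $\varphi_2(i)\varphi_2(j)$. Therefore the open segment $\varphi_1(i)\varphi_1(j)$ avoids every obstacle of $S_1$ exactly when the open segment $\varphi_2(i)\varphi_2(j)$ avoids every obstacle of $S_2$; since $(\varphi_1,S_1)$ is an obstacle representation of $G$, so is $(\varphi_2,S_2)$, and it uses $h$ obstacles.

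Constructing $g$ is the heart of the matter. Let $P=\langle x_1,\dots,x_n\rangle$ be a point sequence, let $\ell_{ij}$ denote the line through $x_i$ and $x_j$, and consider the arrangement $\mathcal A(P)$ of the $\binom{n}{2}$ lines $\ell_{ij}$ together with the positions of $x_1,\dots,x_n$ on them. The crucial observation is that the super-order type records, for each of these lines, the order in which the others cross it: by definition, the type of the admissible sextuple with first pair $(x_a,x_b)$, second pair $(x_c,x_d)$, and third pair $(x_e,x_f)$ is precisely the comparison of the points $\ell_{ab}\cap\ell_{cd}$ and $\ell_{ab}\cap\ell_{ef}$ along $\ell_{ab}$ (oriented from $x_a$ to $x_b$), the value $0$ appearing exactly when a degeneracy (one of the three lines vertical, a parallelism, or a common point) prevents the comparison. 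Taking the second pair to be $(x_a,x_g)$ for some $g\notin\{a,b\}$ makes $\ell_{ab}\cap\ell_{ag}=x_a$, so the super-order type also records where the point $x_a$ falls among the crossings on each line through it. Consequently the super-order type determines, for every line $\ell_{ab}$, the linear order along it of the points of $P$ lying on it and of all its crossings with the other lines; from this one reads off which pairs of segments $\varphi(i)\varphi(j)$ actually cross (the corresponding lines must meet inside both segments) and, applying the same idea to all pairs $a,b$, which segments form the boundary of the unbounded face, i.e.\ the convex hull of $P$. Standard facts about (pseudo)line arrangements then give that this order data determines $\mathcal A(P)$ up to a global reflection, as a connected plane graph whose vertices are labelled (each is either a point $x_i$ or the crossing point of two of the lines $\ell_{ij}$) and whose edges are labelled (each lies on one of the lines $\ell_{ij}$), with a distinguished unbounded face; and a connected plane graph with a distinguished unbounded face has a plane embedding that is unique up to a homeomorphism of $\R^2$. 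Applying this to $\varphi_1(V)$ and to $\varphi_2(V)$, whose arrangements carry the same combinatorial data because their super-order types agree, produces the desired homeomorphism $g$: it maps $\varphi_1(i)$ to $\varphi_2(i)$ and the line $\ell_{ij}$ of the first arrangement onto the line $\ell_{ij}$ of the second, hence maps the portion of $\ell_{ij}$ between $\varphi_1(i)$ and $\varphi_1(j)$ — that is, the segment from $\varphi_1(i)$ to $\varphi_1(j)$ — onto the segment from $\varphi_2(i)$ to $\varphi_2(j)$.

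I expect the main obstacle to be the combinatorial verification compressed into ``standard facts about (pseudo)line arrangements'': that the order-of-crossings data extracted from the super-order type really does reconstruct $\mathcal A(P)$ up to reflection. This calls for some care with lines that pass through a common $x_i$, with the consistency of the rotation systems at the crossing vertices (these are forced once a single global orientation is chosen — whence ``up to reflection'', which is harmless here since reflecting all the obstacles as well changes nothing), and with the degeneracies recorded by the zeros (parallel, coincident, concurrent, or vertical lines), which are themselves determined by the super-order type and therefore occur in $\varphi_1(V)$ exactly when they occur in $\varphi_2(V)$; in that case the two arrangements degenerate in the same way and the same construction applies, with heavier bookkeeping. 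Everything else is routine: given $g$, the first paragraph transfers an obstacle representation essentially verbatim. Finally, configurations with too few points to provide the auxiliary indices used above must be handled separately, but there the lemma is immediate — for example, two labelled triangles with the same super-order type are related by an affine map, possibly orientation-reversing, which carries any obstacle representation of one to an obstacle representation of the other.
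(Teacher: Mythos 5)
Your argument reaches the same conclusion by a genuinely different route. The paper works only with the straight-line drawing of the \emph{edges} of $G$: it forms the planar subdivisions $G_1$ and $G_2$ obtained by adding a vertex at each edge crossing, argues that equal super-order types make these combinatorially equivalent plane drawings, normalizes each obstacle of $S_1$ to be the open face of $G_1$ containing it, and then maps faces to corresponding faces, checking separately that every non-edge of $G$ still meets a transported obstacle. You instead reconstruct the entire arrangement of all $\binom{n}{2}$ lines $\ell_{ij}$ from the super-order type (via the order of crossings along each line, including the positions of the points $x_a$ themselves obtained from sextuples sharing the index $a$) and produce a single ambient homeomorphism $g$ carrying every segment $\varphi_1(i)\varphi_1(j)$ onto $\varphi_2(i)\varphi_2(j)$; the transfer of obstacles is then completely mechanical and the edge/non-edge cases need no separate treatment. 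What your approach buys is a cleaner and stronger conclusion (the two configurations are ambiently equivalent relative to all segments); what it costs is that the entire burden shifts to the reconstruction claim you label a ``standard fact,'' which is comparable in weight to the paper's own unproved assertion of combinatorial equivalence of $G_1$ and $G_2$, so the level of rigor is about the same.

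One concrete inaccuracy to fix: your treatment of the degenerate case asserts that the degeneracies ``occur in $\varphi_1(V)$ exactly when they occur in $\varphi_2(V)$'' and that ``the two arrangements degenerate in the same way.'' The super-order type records only the value $0$ for a degenerate sextuple; it does not distinguish whether the degeneracy is a vertical line, a parallelism, or a concurrence. Two point sequences could therefore agree in super-order type while one realizes a given $0$ by parallel lines and the other by three concurrent lines, and then the two line arrangements are not homeomorphic, so your $g$ need not exist. This does not affect the use of the lemma in the paper, which invokes it only for simple (zero-free) super-order types, but as written your degenerate case is not justified; the safe fix is to restrict the lemma (or your proof of it) to simple point sequences, which is all that \lemref{simple} and the proof of \thmref{main} require.
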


\begin{proof}
  Consider the two plane graphs $G_1$ and $G_2$ obtained by
  adding a vertex where any two edges cross in the embedding
  $\phi_1$, respectively, $\phi_2$ cross.  The fact that
  $\langle\varphi_1(1),\ldots,\varphi_1(n)\rangle$ and
  $\langle\varphi_2(1),\ldots,\varphi_2(n)\rangle$ have the same
  super-order type implies that $G_1$ and $G_2$ are two combinatorially
  equivalent drawings of the same planar graph.  Without loss of
  generality, we can assume that each obstacle $X_1\in S_1$ is an open
  polygon whose boundary is the face, $f_1$, of $G_1$ that contains $X_1$.
  For each such obstacle, we create an obstacle, $X_2\in S_2$, whose boundary
  is the face $f_2$ of $G_2$ that corresponds to $f_1$.  

  All that remains is to verify that an edge $uw$ is in $G$ if and only
  if the segment with endpoints $\varphi_2(u)$ and $\varphi_2(w)$ does not
  intersect any obstacle in $S_2$.
  By construction, no edge of the embedding $\varphi_2$ of $G$
  intersects any obstacle in $S_2$.  Because $P_1$ and $P_2$ have the
  same super-order type it is easy to verify that, for every non-edge
  $uw$ of $G$, the segment $\varphi_2(u)\varphi_2(w)$ intersects some
  obstacle in $S_2$. (Indeed, $\varphi_2(u)\varphi_2(w)$ intersects
  the obstacles of $S_2$ corresponding to those in $S_1$ intersected
  by $\varphi_1(u)\varphi_1(w)$.)  That is, $(\varphi_2,S_2)$ is an
  obstacle representation of $G$ using $h=|S_1|$ obstacles, as required.
\end{proof}

The next lemma shows that we can restrict our attention to embeddings
onto simple point sets.

\begin{lem}\lemlabel{simple}
  If a graph $G$ with vertex set $V=\{1,\ldots,n\}$ has an
  $h$-obstacle representation $(\varphi, S)$, then $G$ has an
  $h$-obstacle representation $(\varphi',S')$ in which the $\langle
  \varphi(1),\ldots,\varphi(n)\rangle$ is a simple point sequence.
\end{lem}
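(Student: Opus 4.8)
The plan is to obtain $\varphi'$ from a sufficiently small, generic perturbation of $\varphi$ and to obtain $S'$ by tracking each obstacle of $S$ through that perturbation. Two facts need to be established: that a small enough perturbation of $\varphi$ still supports an $h$-obstacle representation, and that the perturbation can additionally be chosen to land on a simple (and one-to-one) point sequence. I would dispose of the second, easier fact first. A one-to-one sequence $\langle x_1,\dots,x_n\rangle$ is non-simple precisely when, for some admissible sextuple, one of the three lines $A,B,C$ is vertical, two of them are parallel, or all three pass through a common point; each of these events is the vanishing of a polynomial in the $2n$ coordinates of the points --- a coordinate difference, a $2\times 2$ determinant of direction vectors, and a $3\times 3$ determinant of homogeneous line coordinates, respectively --- and, thanks to admissibility, for each sextuple none of these polynomials vanishes identically on the space of embeddings. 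Hence the non-simple embeddings lie in a finite union of zero sets of nonzero polynomials in $\R^{2n}$, a set of Lebesgue measure zero, so the simple one-to-one embeddings are dense; in particular one of them lies in every neighbourhood of $\varphi$.

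The substantive part is to produce a value $\epsilon=\epsilon(\varphi,S)>0$ such that every embedding $\varphi'$ with $\max_i\|\varphi'(i)-\varphi(i)\|<\epsilon$ supports an $h$-obstacle representation. First I would normalize $S$ exactly as in the proof of \lemref{order-type}: assume that each obstacle is an open face of the plane graph $G_1=G_1(\varphi)$ obtained from the straight-line drawing of $G$ under $\varphi$ by adding a vertex at each crossing of two edges. Replacing an obstacle by the open face of $G_1$ that contains it never unblocks a non-edge, and an open face of $G_1$ meets no edge-segment and hence blocks no edge; if merging obstacles in this way reduces their number below $h$, I would pad back up to $h$ with tiny disk obstacles placed inside some open face of $G_1$, which is harmless for the same reason. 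After this normalization, being an obstacle representation amounts to the single condition that every non-edge $uw$ has its open segment $\varphi(u)\varphi(w)$ meeting some obstacle face (the edge condition then holds automatically). For each non-edge $uw$ I would choose $\lambda_{uw}\in(0,1)$ so that $p_{uw}:=(1-\lambda_{uw})\varphi(u)+\lambda_{uw}\varphi(w)$ lies in some obstacle face $f_{uw}\in S$; for each face $f$ arising as some $f_{uw}$ I would fix a base point $q_f\in f$ and, for each non-edge $uw$ with $f_{uw}=f$, a path inside $f$ from $p_{uw}$ to $q_f$. These finitely many objects --- the points $p_{uw}$ and $q_f$ and the compact paths --- each lie at a positive distance from the union of the edge-segments of $\varphi$, and I would take $\epsilon$ to be smaller than a quarter of the least of these distances (and than half the least distance between two points of $\varphi$, so that $\varphi'$ is automatically one-to-one).

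Now fix any $\varphi'$ with $\max_i\|\varphi'(i)-\varphi(i)\|<\epsilon$. Displacing every point by less than $\epsilon$ displaces every edge-segment by less than $\epsilon$ in Hausdorff distance, so each $p_{uw}$, each $q_f$ and each chosen path still avoids the union of edge-segments of $\varphi'$. Consequently $q_f$ lies in a well-defined open face $f'$ of $G_1(\varphi')$; the chosen path shows that $p_{uw}$ lies in that same face $f'$ whenever $f_{uw}=f$; and $p'_{uw}:=(1-\lambda_{uw})\varphi'(u)+\lambda_{uw}\varphi'(w)$, being within $\epsilon$ of $p_{uw}$, lies there too. I would then set $S'=\{f':f\text{ arises as some }f_{uw}\}$, padded with tiny disks in an open face of $G_1(\varphi')$ to bring the count up to exactly $h$. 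Every member of $S'$ is an open face of $G_1(\varphi')$, hence meets no edge-segment of $\varphi'$, so no edge of $G$ is blocked; and each non-edge $uw$ is blocked because its open segment $\varphi'(u)\varphi'(w)$ contains $p'_{uw}$, which lies in the member of $S'$ coming from $f_{uw}$. Thus $(\varphi',S')$ is an $h$-obstacle representation of $G$; choosing $\varphi'$ within the $\epsilon$-neighbourhood to be also simple and one-to-one (possible by the first paragraph) completes the proof.

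The step I expect to be the genuine obstacle is the one the base-point device is meant to absorb: under the perturbation a face $f$ of $G_1(\varphi)$ may split into several faces of $G_1(\varphi')$ (or several faces may merge), so one cannot simply declare that $f$ deforms to a single face $f'$. Routing every contact point $p_{uw}$ that belongs to a common face $f$ to one base point $q_f$ before perturbing is exactly what forces all of them into a single face of $G_1(\varphi')$, and hence keeps $|S'|\le h$ regardless of how the combinatorics of the arrangement change. Getting the obstacle normalization and the padding bookkeeping right, so that the obstacle count ends at exactly $h$, is the secondary place where care is needed.
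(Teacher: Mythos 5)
Your proof is correct, but it is organized quite differently from the paper's. The paper normalizes the obstacles to open sets lying at some distance $\epsilon>0$ from every edge segment, then keeps that obstacle set \emph{fixed} and perturbs the points one at a time: each step moves a single point $a_1$ involved in a degenerate sextuple in a generic direction, by a distance small enough to preserve every visibility and every non-degenerate type, and the process terminates because each step strictly decreases the number of degenerate sextuples. You instead fix one global $\epsilon$ and show that \emph{every} embedding in the $\epsilon$-ball around $\varphi$ supports some $h$-obstacle representation, rebuilding the obstacle set from the faces of the perturbed arrangement via the base-point-and-path device, and then get simplicity for free from the density of simple configurations (the complement of finitely many polynomial hypersurfaces in $\R^{2n}$). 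What your route buys is a one-shot argument with no iteration or termination bookkeeping; what it costs is the face-tracking machinery, which the paper avoids entirely by never changing the obstacles. Indeed, had you normalized as the paper does (open obstacles at distance at least $4\epsilon$ from the edge segments, rather than faces of $G_1(\varphi)$), you could have taken $S'$ to be literally the normalized $S$ and the splitting/merging worry you flag at the end would not arise. One shared soft spot, which the paper's own proof also glosses over: both normalizations implicitly assume that no obstacle passes through a vertex point $\varphi(u)$ --- a connected obstacle may contain such a point without meeting any open edge segment, in which case it is not contained in a single open face and cannot be grown into an open set avoiding the closed edges. This is harmless and easily legislated away by first retracting each obstacle slightly off the vertex points, but it deserves a sentence in either write-up.
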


\begin{proof}
  We first note that, by rotation, we can assume that no two points
  in the image of $\varphi$ have the same x-coordinate.  Therefore,
  any degenerate sextuple in the image of $\varphi$ is not the result
  of a vertical line through two points in the sextuple.  Instead, each
  degenerate sextuple is the result of two parallel lines or of three
  lines passing through a common point.

  By growing the obstacles in $S$ into maximal open sets and then
  shrinking them slightly, we may assume that each obstacle in $S$ is
  an open set that is at some positive distance $\epsilon >0$ from each
  line segment $\varphi(u)\varphi(w)$ joining the two endpoints of each
  edge $uw$ in $G$.  Call this new set of obstacles $S'$.  We say that
  two points $a$ and $b$ are \emph{visible} if the open line segment
  with endpoints $a$ and $b$ does not intersect any obstacle in $S'$,
  otherwise we say that $a$ and $b$ are \emph{invisible}.

  If the image of $\varphi$ is a non-simple point set, then some
  point $a_1=\varphi(u)$ is involved in a degenerate sextuple
  $T=(a_1,a_2,b_1,b_2,c_1,c_2)$.  Then there exists a sufficiently
  small perturbation of $a_1$ that moves it to a new location $a_1'$ that
  simultaneously
  \begin{enumerate}
    \item eliminates all the degenerate sextuples that include $a_1$;
    \item does not change the type of any non-degenerate sextuple;
    \item does not result in any point $b=\varphi(w)$ that is visible to $a_1$
      being invisible to $a_1'$; and
    \item does not result in any point $b=\varphi(w)$ that is invisible to $a_1$
      being visible to $a_1'$.
  \end{enumerate}
  Note that the first two properties ensure that, by moving $a_1$ to $a_1'$,
  the number of degenerate sextuples decreases.  The last three properties
  ensure that the resulting embedding along with the obstacle set $S'$
  is an obstacle representation of $G$.  We can easily verify that such
  a point $a_1'$ exists because
  \begin{enumerate}
    \item For each degenerate sextuple that includes $a_1$ there are only
    a constant number of directions $(a-a')/\|a-a'\|$ that preserve the
    degeneracy of that sextuple.
    \item Changing the type of a non-denerate sextuple involving $a_1$
    requires moving $a_1$ by some distance $\delta >0$;  we can ensure
    that our perturbation moves $a_1$ by less than $\delta$.
    \item All obstacles are at distance $\epsilon >0$ from the
    edges of the embedding.  We can ensure that the perturbation moves
    $a_1$ by less than $\epsilon$.
    \item All obstacles are open sets, so every non-edge intersects the
    interior of one or more obstacles.  By making the perturbation of
    $a_1$ sufficiently small, each such non-edge continues to intersect
    the interiors of the same obstacles.
  \end{enumerate}
  The preceding perturbation step can be repeated until no degenerate
  sextuples remain to obtain the desired $h$-obstacle representation
  $(\varphi',S')$.
\end{proof}

What remains is to show that $N$, the number of super-order types
corresponding to point sets of size $n$ is not too big.  Luckily,
the methods used by Goodman and Pollack \cite{goodman.pollack:upper}
to upper bound the numbers of order types and combinatorial types
generalize readily to super-order types.  The proof of the following
result is given in \appref{order-type}.

\begin{lem}\lemlabel{order-type-count}
  The number of sequences in $\{-1,{+1}\}^{r}$ that are the super-order
  type of some simple point sequence of length $n$ is $e^{O(n\log n)}$.
\end{lem}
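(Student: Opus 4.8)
The plan is to follow the template of Goodman and Pollack \cite{goodman.pollack:upper}: realize each coordinate $\sigma(T)$ of the super-order type as the sign of a polynomial of bounded degree in the $2n$ coordinates of the point sequence, and then invoke the standard bound on the number of sign patterns realized by a finite family of polynomials. Concretely, fix an admissible sextuple $T=\langle a_1,a_2,b_1,b_2,c_1,c_2\rangle$ and parametrize the directed line $A$ by $p(t)=a_1+t(a_2-a_1)$, so that $t$ increases in the direction of $A$. Locating $A\cap B$ amounts to solving a $2\times 2$ linear system whose solution, by Cramer's rule, expresses the parameter of $A\cap B$ as a ratio $t_{AB}=N_{AB}/D_{AB}$, where $N_{AB}$ and $D_{AB}$ are $2\times 2$ determinants in the coordinates of $a_1,a_2,b_1,b_2$, hence polynomials of degree $2$; moreover $D_{AB}$ (the wedge of $a_2-a_1$ and $b_2-b_1$) vanishes precisely when $A\parallel B$. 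Define $t_{AC}=N_{AC}/D_{AC}$ analogously. For a \emph{simple} point sequence none of the $D$'s vanish and no three of the lines $A,B,C$ are concurrent, so $A\cap B$ comes after $A\cap C$ on $A$ if and only if $t_{AB}>t_{AC}$, and multiplying this through by the positive quantity $(D_{AB}D_{AC})^2$ shows it is equivalent to $P_T>0$, where
\[
   P_T \;=\; \bigl(N_{AB}D_{AC}-N_{AC}D_{AB}\bigr)\,D_{AB}D_{AC} \enspace .
\]
Thus $\sigma(T)=\mathrm{sign}(P_T)$ on simple sequences, and $P_T$ is a polynomial of degree at most $8$ in the coordinate vector $\mathbf{q}\in\R^{2n}$ of the sequence. (The one remaining degeneracy, namely one of $A,B,C$ being vertical, is itself a polynomial condition on $\mathbf{q}$ and so plays no role for simple sequences.)

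Consequently, the super-order type of a simple point sequence is exactly the sign vector $\bigl(\mathrm{sign}(P_{T_\ell}(\mathbf{q}))\bigr)_{\ell=1}^{r}\in\{-1,{+1}\}^r$ of the family $P_{T_1},\dots,P_{T_r}$, which consists of $r<\binom{n}{2}^3=O(n^6)$ polynomials, each of degree at most $8$, in $N=2n$ real variables. The number of sign patterns realized by $s\ge N$ polynomials of degree at most $d$ in $N$ real variables is $\bigl(O(sd/N)\bigr)^{N}$ — the Milnor--Thom--Warren-type bound that underlies Goodman and Pollack's count of order types \cite{goodman.pollack:upper}. Substituting $s=O(n^6)$, $d=8$, and $N=2n$ (which satisfies $s\ge N$ for all large $n$, the small cases being trivial) yields at most $\bigl(O(n^5)\bigr)^{2n}=e^{O(n\log n)}$ distinct super-order types of simple point sequences of length $n$, which is the claimed bound.

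I expect the only genuine work to be the sign bookkeeping in the first step: writing out the Cramer's-rule numerators and denominators explicitly, and verifying that $P_T$ has the advertised sign on \emph{all} simple sequences — so that no case analysis on the signs of the $D$'s is needed — and that its degree is an absolute constant independent of $n$. Once $\sigma(T)$ is recognized as a polynomial sign, the counting step is a black-box application of the sign-pattern bound, exactly as in \cite{goodman.pollack:upper}.
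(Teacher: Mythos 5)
Your proof is correct and gives the stated bound, but it takes a genuinely different technical route within the same Goodman--Pollack framework. The paper never expresses $\sigma(T)$ itself as a polynomial sign: it instead writes down, for each admissible sextuple, the polynomial \emph{degeneracy} conditions (parallelism, concurrency, verticality), multiplies all $O(n^6)$ of them into a single polynomial $P^*$ of degree $d\in O(n^6)$ in $2n$ variables, bounds the number of connected components of $\R^{2n}\setminus X$ (with $X$ the zero set of $P^*$) by $(2+d)^{2n}=e^{O(n\log n)}$ via \cite[Lemma~2]{goodman.pollack:upper}, and then needs a separate continuity argument --- a sextuple's type cannot pass from $-1$ to $+1$ without becoming $0$ --- to conclude that the super-order type is constant on each component. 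You instead encode each coordinate of the super-order type directly as $\mathrm{sign}(P_T)$ for a degree-$8$ polynomial $P_T$ and invoke the Warren-type bound $(O(sd/N))^N$ on sign patterns of a family of $s=O(n^6)$ bounded-degree polynomials in $N=2n$ variables. Your version trades the paper's topological step for the explicit sign identity $\sigma(T)=\mathrm{sign}(P_T)$, and that identity checks out: with $A$ parametrized from $a_1$ toward $a_2$ the orientation convention matches the paper's definition ($\sigma(T)=+1$ exactly when $t_{AB}>t_{AC}$), the denominators $D_{AB},D_{AC}$ vanish exactly at the relevant parallel degeneracies, the factor $N_{AB}D_{AC}-N_{AC}D_{AB}$ vanishes exactly at concurrency of $A\cap B$ and $A\cap C$, and multiplying by $(D_{AB}D_{AC})^2>0$ removes any case analysis on the signs of the $D$'s; hence $P_T\neq 0$ on simple sequences and distinct super-order types yield distinct sign vectors. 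The two key lemmas (components of the complement of one high-degree hypersurface versus sign patterns of many low-degree polynomials) are essentially equivalent in strength here, and both yield $e^{O(n\log n)}$.
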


\subsection{Proof of \thmref{main}}

For completeness, we now spell out the proof of \thmref{main}.

\begin{proof}[Proof of \thmref{main}]
  Let $G$ be an Erd\"os-R\'enyi random graph with $n$ vertices.
  We say that the (point set which is the) image of $\varphi$ in an
  obstacle representation $(\varphi, S)$ \emph{supports} the obstacle
  representation.  Fix some simple super-order type on $n$ points.
  By \lemref{order-type}, all point sets with this super-order type
  support an obstacle representation of $G$ with $o(n/(\log\log n)^2)$
  obstacles or none of them do.  By \lemref{fixed}, the probability that
  all of them do is at most $p\le e^{-cn\log n}$ for every constant $c>0$.
  By the union bound and \lemref{order-type-count} the probability
  that there is any simple super-order type---and therefore any simple
  point set---that supports an obstacle representation of $G$ with
  $o(n/(\log\log n)^2)$ obstacles is
  \[
     \hat p = p\cdot e^{O(n\log n)} = e^{-cn\log n}\cdot e^{O(n\log n)} < 1
  \]
  for a sufficiently large constant $c$.  Therefore, with probability
  $1-\hat p > 0$, there is no simple point set that supports an obstacle
  representation of $G$ using $o(n/(\log\log n)^2)$ obstacles. We deduce
  that there exists some some graph, $G'$, with this property.  Finally,
  \lemref{simple} implies that we can ignore the restriction to simple
  point sets and deduce that $\obs(G')\in \Omega(n/(\log\log n)^2)$.
\end{proof}

\section{Remarks}

Our proof of \thmref{main} relates the problem of counting the number
of $n$-vertex graphs with obstacle number at most $h$ to the problem of
determining the worst-case obstacle number of a graph with $n$ vertices.
Currently, we use Mukkamala \etal's \thmref{pach-gang}, which proves an
upper-bound of $e^{O(hn\log^2 n)}$ on the number of $n$ vertex graphs
with obstacle number at most $h$.  Interestingly, their argument is an
encoding argument, which shows that any such graph can be encoded as the
order type of a set of $O(hn\log n)$ points followed by a list of the
points in this set that make up the vertices of the (polygonal) obstacles.
Their argument needs only order types (as opposed to super-order types)
since the point set that they specify includes the vertices of the
obstacles.

Any improvement on the upper-bound for the counting problem will
immediately translate into an improved lower-bound on the worst-case
obstacle number.  In particular, let $f(h,n)$ denote the number of
$n$-vertex graphs with obstacle number at most $h$ and let $\hat h(n)
= \max\{h:f(h,n) \le 2^{n^2/4}\}$.  Then our proof technique shows
that there exist graphs with obstacle number at least $n\hat{h}(c\log
n)/(c\log n)$. (\thmref{pach-gang} shows that $\hat{h}(c\log n)\in\Omega(\log n/(\log\log n)^2)$.)

We note that our technique gives an improved lower bound until someone is
able to prove that $\hat h(n)\in\Omega(n)$.  At this point, a simple
argument (see \cite[Theorem~3]{mukkamala.pach.ea:graphs}) shows that
there exists graphs with obstacle number at least $\hat{h}(n)$.

We conjecture that improved upper-bounds on $f(h,n)$ that reduce the
dependence on $h$ are the way forward:
\begin{conj}\conjlabel{h}
  $f(h,n) \le 2^{g(n)\cdot o(h)}$, where $g(n)\in O(n\log^2 n)$.
\end{conj}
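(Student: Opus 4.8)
The plan is to improve the encoding scheme of Mukkamala \etal\ \cite{mukkamala.pach.ea:lower} that underlies \thmref{pach-gang}. Recall how that scheme works: an $n$-vertex graph $G$ with $\obs(G)\le h$ is described by (i) the order type of a set of $N=O(hn\log n)$ points consisting of the $n$ vertices of $G$ together with the vertices of the $h$ polygonal obstacles, and (ii) a list specifying which of these $N$ points are obstacle vertices. Since an order type of $N$ points can be recorded using $O(N\log N)$ bits \cite{goodman.pollack:upper}, this yields $f(h,n)\le 2^{O(hn\log^2 n)}$. There are two sources of waste: the $\Theta(n\log n)$ obstacle vertices charged to each of the $h$ obstacles, and the extra $\log N=\Theta(\log n)$ factor incurred by encoding a full order type. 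The goal is to eliminate the first of these --- replacing the linear-in-$h$ dependence by a sublinear one --- while keeping the point-set part of the encoding at $O(n\log^2 n)$ bits.

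First I would reduce to the case where every obstacle is convex, by replacing each obstacle with a maximal convex subset; this may increase the number of obstacles, but only by a multiplicative constant, which is harmless because the target bound has the form $2^{g(n)\cdot o(h)}$ and $o(Ch)=o(h)$. Next one observes that a convex obstacle $X$ influences $G$ only through the set of pairs $\{p,q\}$ of points of $P=\varphi(V)$ whose segment $pq$ meets $X$, and that, once the order type of $P$ is fixed, this set is already determined by a sequence of $O(n)$ ``combinatorial events'' --- for each point $p$, the position of the two tangent lines from $p$ to $X$ in the radial order around $p$, together with a constant amount of side information --- each event being one of $n^{O(1)}$ possibilities. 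Recording these events for all $h$ obstacles, on top of the order type of $P$, already yields the intermediate bound $f(h,n)\le 2^{O(n\log n)}\cdot 2^{O(hn\log n)}=2^{O(hn\log n)}$, which shaves one logarithmic factor but does not yet break linearity in $h$.

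The decisive step is to get below a linear dependence on $h$, and this is where I expect the real difficulty to lie. Two complementary routes seem natural. The first is structural: show that a graph with obstacle number $\le h$ always admits a representation with $O(h)$ obstacles in which all but $o(h)$ of the obstacles are triangles (equivalently, have $O(1)$ complexity), for instance by a charging argument in which a complex obstacle is ``paid for'' by the non-edges that only it blocks, combined with an upper bound on how many distinct non-edges a single convex obstacle can be solely responsible for. The second is a compression route: encode the entire system of $h$ obstacles jointly rather than one at a time, exploiting that $h$ pairwise-disjoint convex bodies interact with $P$ in a highly constrained way (e.g.\ the tangent lines from a fixed point $p$ to the $h$ obstacles, listed in radial order, together with the obstacles' pairwise ``above/below'' relations, admit a description of length $o(h\log n)$ per point rather than $\Theta(h\log n)$). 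Either route would push the obstacle part of the encoding below linear in $h$ --- say to $O(hn\log^2 n/\log h)$ bits --- which is of the form $g(n)\cdot o(h)$ demanded by \conjref{h}. The principal obstacle is that nothing currently forces the obstacles of an optimal representation to be simple: a priori an obstacle could be an intrinsically complicated convex region needed to realize a delicate visibility pattern among the $n$ points. So the structural route requires a genuinely new theorem bounding the complexity of optimal obstacle systems, and the compression route requires a new combinatorial-geometric compression lemma for families of convex bodies; the stubborn linear dependence on $h$ in every known encoding is precisely the place where a new idea is needed, which is why \conjref{h} is still open.
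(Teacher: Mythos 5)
The statement you are attacking is \conjref{h}, which the paper itself leaves open: it is stated as a conjecture, and the only thing the paper offers in its support is the crude observation that $f(h,n)\le 2^{h\cdot g(n)}$ follows from $f(1,n)\le 2^{g(n)}$ because a graph with an $h$-obstacle representation is an intersection $(V,\bigcap_{i=1}^h E_i)$ of $h$ graphs of obstacle number $1$. So there is no proof in the paper to match your argument against, and --- as you yourself concede in your final sentences --- your proposal is not a proof either: the ``decisive step'' of pushing the encoding below linear in $h$ is exactly the content of the conjecture, and you leave it as one of two speculative routes (a structural theorem forcing all but $o(h)$ obstacles to have constant complexity, or a joint-compression lemma for families of convex bodies), neither of which you establish. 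A proposal whose key step is ``a new idea is needed here'' does not settle the statement; it restates it.

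Beyond the admitted gap, two of your preparatory steps are themselves shaky. First, replacing each obstacle by maximal convex pieces does \emph{not} multiply the number of obstacles by only a constant: a single polygonal obstacle with $r$ reflex vertices needs $\Omega(r)$ convex pieces, and nothing bounds $r$ by a constant (the encoding of Mukkamala \etal\ allows obstacles with $\Theta(n\log n)$ vertices). If the obstacle count inflates from $h$ to $h'$ with $h'/h$ unbounded, a bound of the form $2^{g(n)\cdot o(h')}$ no longer yields $2^{g(n)\cdot o(h)}$, so the reduction to convex obstacles is not ``harmless.'' Second, the claim that, given only the order type of $P=\varphi(V)$, a convex obstacle's blocked pairs are determined by the radial positions of the two tangents at each point plus ``a constant amount of side information'' is not justified: a point $q$ lying inside the tangent wedge at $p$ may still see $p$ if $q$ lies between $p$ and the obstacle, so one must also encode, per point, which points of the wedge lie in front of the near boundary arc of the obstacle --- and it is precisely this kind of information whose cheap encoding is not known (indeed, the paper's \figref{order-type-problem} is a warning that visibility is not determined by order-type-level data alone). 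So even your ``intermediate bound'' $2^{O(hn\log n)}$ would require a genuine argument, and the sublinear-in-$h$ improvement remains, as the paper says, open.
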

In support of this conjecture, we observe that an upper bound of the
form $f(1,n)\le 2^{g(n)}$ is sufficient to give the crude upper bound
$f(h,n)\le 2^{h\cdot g(n)}$ since any graph with an $h$-obstacle
representation is the common intersection of $h$ graphs that each
have a 1-obstacle representation.  That is, if $\obs(G)\le h$, then
there exists $E_1,\ldots,E_h$ such that $G=(V,\bigcap_{i=1}^h E_i)$
and $\obs(V,E_i)=1$ for all $i\in \{1,\ldots,h\}$.  This seems like a
very crude upper bound in which many graphs are counted multiple times.
\conjref{h} asserts that this argument overestimates the dependence on $h$.

\section*{Acknowledgement}

This work was initiated at the \emph{Workshop on Geometry and Graphs},
held at the Bellairs Research Institute, March 10-15, 2013.  We are
grateful to the other workshop participants for providing a stimulating
research environment.

\bibliographystyle{abbrv}
\bibliography{obstacle}

\appendix

\section{Proof of \lemref{order-type-count}}
\applabel{order-type}

\begin{proof}[Proof of \lemref{order-type-count}]
   For a point, $p$, let $\x(p)$ and $\y(p)$ denote the x- and
   y-coordinate, respectively, of $p$.  Consider what it means for a
   sextuple $T=(a_1,a_2,b_1,b_2,c_1,c_2)$, which defines three lines
   $A$, $B$, and $C$, to be degenerate.  This can occur, for example,
   if $A$ and $B$ are parallel.  The lines $A$ and $B$ are parallel if
   and only if
   \[
      \x(a_1-a_2)\cdot \y(b_1-b_2) - 
       \x(b_1-b_2)\cdot \y(a_1-a_2) = 0 \enspace .
   \]
   (This formula is a formalization of the less precise statement
   ``the slopes of $A$ and $B$ are the same.'')
   Observe that the preceding equation is a polynomial in
   $a_1,a_2,b_1,b_2$ of degree 2.

   Next, consider the case where $T$ is degenerate because $A$, $B$, and
   $C$ intersect in a common point or because one of $A$, $B$, or $C$
   is vertical.  This occurs if and only if the following determinant
   is undefined or equal to zero:
   \begin{equation}
   \left|\begin{array}{lll}
   \y(a_1)-\x(a_1)\left(\frac{\y(a_1-a2)}{\x(a_1-a_2)}\right) & \frac{\y(a_1)-\y(a_2)}{\x(a_1)-\x(a_2)} & 1 \\
   \y(b_1)-\x(b_1)\left(\frac{\y(b_1-b2)}{\x(b_1-b_2)}\right) & \frac{\y(b_1)-\y(b_2)}{\x(b_1)-\x(b_2)}  & 1 \\
   \y(c_1)-\x(c_1)\left(\frac{\y(c_1-c2)}{\x(c_1-c_2)}\right) & \frac{\y(c_1)-\y(c_2)}{\x(c_1)-\x(c_2)}  & 1 \\
   \end{array}\right| \enspace .
   \eqlabel{determinant}
   \end{equation}
   (The values in this matrix are the $\y$-intercepts and slopes of the
   lines $A$, $B$, and $C$.)
   Multiplying the matrix in \eqref{determinant} by 
   \[
      \x(a_1-a_2)\cdot
      \x(b_1-b_2)\cdot
      \x(c_1-c_2)
   \]
   yields a polynomial of degree $6$ in the six variables
   $a_1,a_2,b_1,b_2,c_1,c_2$ that is equal to zero if and only if $A$,
   $B$, and $C$ contain a common point or one of $A$, $B$, or $C$
   is vertical. (Recall that $\det(cA)=c^r\cdot\det(A)$ when $A$ is a
   $r\times r$ matrix.)

   For the remainder of the proof, we proceed exactly as in
   \cite{goodman.pollack:upper}.  We can treat any sequence of $n$
   points in $\R^2$ as a single point in $\R^{2n}$.  Applying the
   preceding conditions for determining the degeneracy to each of
   the $O(n^6)$ admissible sextuples of points results in a set of
   $O(n^6)$ polynomials in $2n$ variables, each of constant degree.
   By multiplying these polynomial together, we obtain a single
   polynomial, $P^*$, in $2n$ variables and having degree $d\in O(n^6)$.
   If $X\subset\R^{2n}$ is the zero set of $P^*$, then $\R^{2n}\setminus
   X$ has at most $(2+d)^{2n}=e^{O(n\log n)}$ connected components
   \cite[Lemma~2]{goodman.pollack:upper}.

   Observe that $X$ corresponds exactly to the set of non-simple
   point sequences and observe that a sextuple of points can not be moved
   continuously so that its type goes from $-1$ to $+1$, or vice versa,
   without its type becoming $0$ at some point during the movement.
   In particular, it is not possible to change the super-order type of
   a simple point sequence without going through a non-simple super-order type.
   Thus, within each component, $C$, of $\R^{2n}\setminus X$, the
   super-order type corresponding to every point in $C$ is the same.
   We conclude that the number of super-order types of simple point
   sequences is at most the number of components of $\R^{2n}\setminus X$,
   which is $e^{O(n\log n)}$, as required.
\end{proof}

\section*{Authors}

\noindent
\includegraphics[width=.45\textwidth]{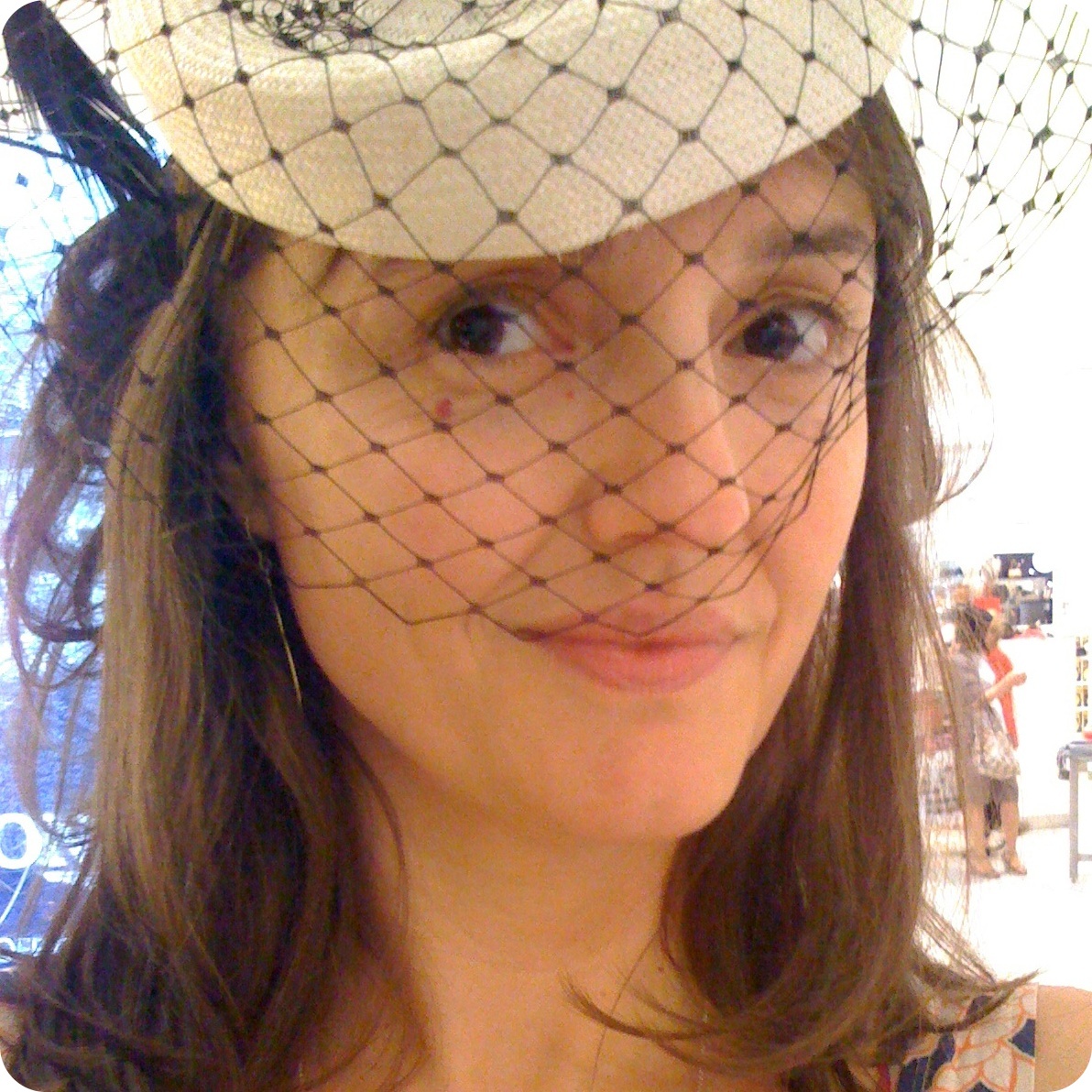}% 
\hspace{.1\textwidth}%
\includegraphics[width=.45\textwidth]{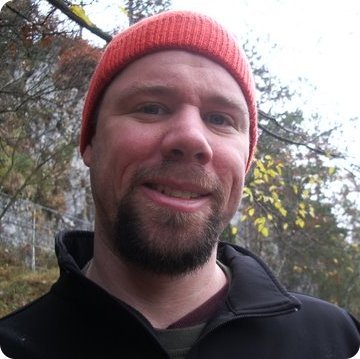}%

\noindent\emph{Vida Dujmovi\'c.}
School of Mathematics and Statistics and Department of Systems and Computer Engineering, Carleton University
%, \texttt{vida@cs.mcgill.ca}

\noindent\emph{Pat Morin.}
School of Computer Scence, Carleton University
%, \texttt{\{morin,smid\}@scs.carleton.ca}

\end{document}